\newtheorem{theorem}{Theorem}
\newtheorem{example}{Example}[section]
\title{\textbf{Bounding the probability of causality under ordinal outcomes}}
\author[1]{Hanmei Sun}
\author[1]{Chengfeng Shi}
\author[1*]{Qiang Zhao}
\affil[1]{School of Mathematics and Statistics, Shandong Normal University, Jinan, Shandong, 250014, China.}
\date{}
\begin{document}

\maketitle

\let\thefootnote\relax\footnotetext{
    *Corresponding author. E-mail(s): qzhao@sdnu.edu.cn; \\
    Contributing authors: hmsun@sdnu.edu.cn; 2022020521@stu.sdnu.edu.cn; \\
}
\begin{abstract}
The probability of causation (PC) is often used in liability assessments. In a legal context, for example, where a patient suffered the side effect after taking a medication and sued the pharmaceutical company as a result, the value of the PC can help assess the likelihood that the side effect was caused by the medication, in other words, how likely it is that the patient will win the case. Beyond the issue of legal disputes, the PC plays an equally large role when one wants to go about explaining causal relationships between events that have already occurred in other areas. This article begins by reviewing the definitions and bounds of the probability of causality for binary outcomes, then generalizes them to ordinal outcomes. It demonstrates that incorporating additional mediator variable information in a complete mediation analysis provides a more refined bound compared to the simpler scenario where only exposure and outcome variables are considered.
\end{abstract}
\textbf{Keywords:} causes of effects, probability of causation, ordinal outcomes, mediator variable.

\section{Introduction}

Statisticians are often interested in exploring causal relationships between two events,  typically involving two types of questions: the effects of causes (EOC) and the causes of observed effects (COE). The former focuses on predicting and inferring the effects of interventions in groups, which can be addressed through simple experimental designs or statistical methods in observational studies. The latter aims to determine whether a specific cause led to an observed outcome in an individual case, in other words, whether there is a causal link between a particular exposure and the resulting outcome. In particular, epidemiologists endeavour to investigate whether a disease was caused by a certain exposure (e.g., taking medication). Consider the case where Ann experienced a headache, took a medication, and her headache disappeared shortly afterward. The question arises: was the disappearance of Ann’s headache caused by the medication? The study of such questions will inevitably lead us into counterfactual thinking. Specifically, given that Ann took the medication and her headache resolved, we need to consider: would her headache have disappeared if she had not taken the medication? If Ann’s headache persists without the medication, it would be reasonable to attribute her headache's disappearance to the medication. However, since we can never observe the outcome when Ann does not take the medication, establishing causality between two events remains challenging. Consequently, we cannot directly explain the causal relationship between events observed in individual cases based solely on counterfactual reasoning. A natural idea is to find a quantity that measures the probability that an outcome is attributable to a particular cause.

Generally speaking, EOC problems are easier to study than COE problems, which means that the former are more likely to be successful in empirical studies. However, most of the existing research work on causality has considered how to deal with COE problems, which is also the focus of this paper. Dawid (2015) described and contrasted how EOC problems can be tackled under decision theory, structural equations, and the potential outcome framework (Neyman 1923; Rubin 1974), and showed that the potential outcome framework is not necessary to deal with EOC problems. In contrast, the counterfactual theoretical logic is necessary for dealing with COE problems (Dawid and Musio 2022). Within the counterfactual framework, we can define the probability of causality, which allows us to effectively handle COE problems of interest.

The probability of causality has broad applications across various fields, including legal disputes, health sciences, and market analyses. It helps assess the likelihood that one event is attributable to another. Pearl (1999) presented definitions of three aspects of probability of causality: the probability of necessity (PN), the probability of sufficiency (PS), and the probability of sufficient necessity (PNS). Tian and Pearl (2000) demonstrated how both experimental and non-experimental data can be used to identify the probability of causality under assumptions such as exogeneity and monotonicity. Building on this, Kuroki and Cai (2011) incorporated additional covariate information to obtain more precise bounds on the probability of causality. Dawid et al. (2016) defined the probability of causality based on the probability of necessity defined by Pearl and stated that: in general, we can’t give a precise estimation of the PC, but we can give an upper and lower bound of the PC under certain assumptions. When the value of PC exceeds 0.5, a causal link between events can be established on a ``balance of probabilities" basis. Lu et al. (2023) further considered the attribution problem in a multivariate context, proposing the posterior total causal effect and posterior direct causal effect (Li, Lu et al. 2024), and proving the identifiability of these causal effects under the assumptions of no confounding and monotonicity. Most existing studies focus on binary exposure and outcome variables, Li and Pearl (2024) showed how observational and experimental data can be used to bound the probability of causality when both variables are non-binary, demonstrating the application of theoretical limits of PC in various contexts. Many practical issues involve non-binary exposures or outcomes, highlighting the need for further refinement and development of the theoretical study of probability of causality in these areas.

In this article, we focus on considering bounds on the probability of causality under ordinal outcomes in complete mediation analysis. Section 2 gives the notation used to define the probability of causality and the basic assumptions needed for the subsequent theoretical analysis, and gives the bounds on the probability of causality under binary outcomes. We further consider the form of the probability of causality under ordinal outcomes in Section 3, and derive bounds on the PC in two different cases using similar principles as in Section 2. In Section 4, we analyze the bounds of probability of causality under three-valued outcomes using numerical simulations. Section 5 summarizes the research in this paper and provides an outlook for subsequent research.

\section{Probability of causality under binary outcomes}
This section reviews the definition of the probability of causality for binary exposure and outcome variables, introduces the notation and assumptions used, and presents the upper and lower bounds for two distinct scenarios.
\subsection{Simple scenario}

Firstly, consider the bounds on the probability of causality when only information about the exposure and outcome variables is available. In Ann's case, assume that there is a binary exposure variable $D\in \{0,1\}$, where $D=1$ if Ann took the medication and $D=0$ otherwise. The outcome variable $Y$ is also binary, with $Y=1$ indicating that the headache disappeared and $Y=0$ otherwise. Define $\mathbf{Y}:=(Y(0),Y(1))$, where $Y(d)$ represents the potential outcome of $Y$ if $D$ is set to the value $d$ by external intervention. Dawid, Murtas and Musio (2016) used the potential outcome framework to define the probability of causality in Ann's case:
\begin{equation}
 P{{C}_{A}}={{P}_{A}}\left( {{Y}_{A}}(0)=0|{{D}_{A}}=1,{{Y}_{A}}(1)=1 \right),         
 \end{equation}
 where ${{P}_{A}}$ denotes the probability distribution of the corresponding attribute of Ann.

Next, we need to introduce two basic assumptions used to infer causality in Ann's example.

\noindent$\mathbf{Assumption}$ $\mathbf{1}$ (D-Y no confounding). $D$ is independent of $\mathbf{Y}$.

\noindent$\mathbf{Assumption}$ $\mathbf{2}$ (Exchangeability). Ann is exchangeable with the individuals in the study population.

Assumption 1 implies that there are no confounders between $D$ and $Y$. Under Assumption 2, data from a population exposed and unexposed to the same drug as Ann can be used to identify the probability distribution of Ann's corresponding attributes.

Under Assumptions 1 and 2, Eq. (1) is equivalent to
\begin{equation}
 P{{C}_{A}}=\Pr (Y(0)=0|Y(1)=1).
 \end{equation}

Eq. (2) reflects the proportion of individuals whose headaches do not resolve without medication among those whose headaches would resolve if they took medication. In other words, it indicates the probability that the disappearance of a headache can be attributed to the medication. The denominator of Eq. (2) can be directly estimated from the data, while the numerator represents the probability of a pair of counterfactual values, we cannot determine this equation directly from the data, but can only give the corresponding bounds. As shown in Dawid et al. (2016), the bounds for ${{PC}_{A}}$ are as follows:

\begin{equation}
1-\frac{1}{RR}\le P{{C}_{A}}\le \frac{\Pr (Y=0|D=0)}{\Pr (Y=1|D=1)},
 \end{equation}
where 
$$RR=\frac{\Pr (Y=1|D=1)}{\Pr (Y=1|D=0)}$$
denotes the causal risk ratio. When $RR>2$, the probability of causality ${{PC}_{A}}>0.5$, so that the causal relationship between events can be determined on a ``balance of probabilities" basis.
 
\subsection{Complete mediator}

This subsection explores the bounds on the probability of causality under the causal path depicted in Figure 1. For simplicity, let $M$ is a binary mediator variable. We aim to determine the bounds on the probability of causality given additional information about the mediator and assuming that exposure has no direct effect on the outcome. Let $M(d)$ represents the potential outcome of the mediator when $D$ is set to the value $d$. Similarly, let ${{Y}^{*}}(m)$ denotes the counterfactual value of $Y$ when $M$ is set to $m$ and $D$ is set to $d$. It is straightforward to see that ${{Y}^{*}}(M(d))=Y(d)$.

\begin{figure}[H]                          
  \centering                              
  \includegraphics[scale=1.4]{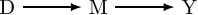}       
  \caption{Causal diagrams in which the effect of $D$ on $Y$ is completely mediated by $M$} \label{fig1}
\end{figure}

Define $\mathbf{M}:=(M(0),M(1))$, $\mathbf{{Y}^{*}}:=({{Y}^{*}}(0),{{Y}^{*}}(1))$. Referring to the way it was defined in Dawid et al (2016), we use the following symbols to denote potential response pairs as well as the conditional distributions of $M$ and $Y$ :
\begin{gather*}
{{m}_{pq}}:=\Pr (M(0)=p,M(1)=q)  \\
{{y}_{pq}}:=\Pr (Y(0)=p,Y(1)=q)  \\
{{y}^{*}}_{pq}:=\Pr ({{Y}^{*}}(0)=p,{{Y}^{*}}(1)=q) \\
{{m}_{p+}}:=\Pr (M=p|D=0)  \\
{{m}_{+q}}:=\Pr (M=q|D=1)  \\
{{y}_{p+}}:=\Pr (Y=p|D=0) \\
{{y}_{+q}}:=\Pr (Y=q|D=1)  \\
{{y}^{*}}_{p+}:=\Pr (Y=p|M=0)  \\
{{y}^{*}}_{+q}:=\Pr (Y=q|M=1). \nonumber \\
\end{gather*}
In addition to the exchangeability hypothesis, this subsection also needs to introduce the no-confounding hypothesis between D, M, and Y:

\noindent$\mathbf{Assumption}$ $\mathbf{3}$ (D-M-Y no confounding). $D$, $\mathbf{M}$ and $\mathbf{{Y}^{*}}$ are mutually independent.

Under Assumptions 2 and 3, Dawid et al. (2016) illustrated that, given the causal mechanism depicted in Figure 1, the lower bound on the probability of causality is identical to the lower bound in Eq. (3), while the upper bound is improved compared to the case without the mediator variable, which we denote here as
\begin{equation}
\frac{{{\alpha }_{1}}{{\alpha }_{2}}+({{\alpha }_{1}}+{{\beta }_{1}})({{\alpha }_{2}}+{{\beta }_{2}})}{{{y}_{+1}}},
 \end{equation}
where ${{\alpha }_{1}}=\min ({{m}_{0+}},{{m}_{+1}})$, ${{\alpha }_{2}}=\min ({{y}^{*}}_{0+},{{y}^{*}}_{+1})$, ${{\beta }_{1}}={{m}_{+0}}-{{m}_{0+}}$, ${{\beta }_{2}}={{y}^{*}}_{+0}-{{y}^{*}}_{0+}$.

\begin{example}
Suppose the following probabilities can be obtained from the data:
\begin{align*}
&\Pr \left( M=1|D=1 \right)=0.7    &\Pr \left( M=1|D=0 \right)=0.15 \\
&\Pr \left( Y=1|M=1 \right)=0.95   &\Pr \left( Y=1|M=0 \right)=0.2.
 \end{align*}
According to Eqs. (3) and (4), it is calculated that $0.57\le P{{C}_{A}}\le 0.78$. A wider bound is obtained if the information on the mediator variable is not utilized: $0.57\le P{{C}_{A}}\le 0.95$.
\end{example}

\section{Probability of causality under ordinal outcomes}

It is known that certain outcomes of interest in fields such as education, epidemiology and the social sciences are ordinal. Therefore, it makes sense to extend the relevant conclusions from attribution studies to the case of ordinal outcomes. This section builds on Section 2 by inferring bounds on the probability of causality in the context of ordinal outcomes.

\subsection{Simple scenario}

The first consideration is how to define and derive bounds on the probability of causality when generalizing the binary outcomes from Section 2.1 to ordinal outcomes. We continue with the example of Ann's headache, but now assume that $Y \in \{0, 1, \ldots, T\}$ represents an ordinal outcome. For instance, $Y$ denotes the degree of headache, with higher values indicating less severe symptoms. Suppose Ann experienced a headache with a level of $Y = t$ and took the medication, resulting in an observed outcome of $Y > t$. The question we are interested in is whether or not Ann's headache reduction was caused by taking the medication. To address this, we need to evaluate the probability that Ann's observed outcome would be $Y \le t$ had she not taken the medication. In other words, how likely it is that Ann's headache level would not have decreased without taking the medication. Analogous to the definition of the probability of causality for binary outcomes, the probability of causality is defined here as:

\begin{equation}
P{{C}_{A}}={{P}_{A}}({{Y}_{A}}(0)\le t|{{D}_{A}}=1,{{Y}_{A}}(1)>t).
\end{equation}
It is easy to see that the larger the value of ${{PC}_{A}}$ the more it indicates that Ann's medication reduces her headaches.

When Assumptions 1 and 2 hold, an equivalent form of Eq. (5) can be obtained
\begin{equation}
P{{C}_{A}}=\Pr (Y(0)\le t|Y(1)>t),
\end{equation}
where Pr denotes the probability in the entire population. Further simplifying Eq. (6) using the notation defined in the previous section, we have
\begin{equation}
P{{C}_{A}}=\frac{\sum\nolimits_{k>t}{\sum\nolimits_{l\le t}^{{}}{{{y}_{lk}}}}}{\sum\nolimits_{k>t}{{{y}_{+k}}}}.
\end{equation}
The denominator of Eq. (7) can be estimated directly from the data, similarly to the case of binary values, we can only obtain the upper and lower bounds of the numerator, and then we arrive at the following result.
\begin{theorem}
Considering binary exposure and ordinal outcome variables, the lower and upper bounds on the probability of causality are denoted by $PCL$ and $PCU$, respectively. Under Assumptions 1 and 2, the bounds on ${{PC}_{A}}$ defined by Eq. (5) are as follows
 \end{theorem}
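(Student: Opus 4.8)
The plan is to reduce the problem to bounding the single unidentifiable quantity in Eq. (7) and then to apply the Fr\'echet--Hoeffding inequalities to the joint law of the counterfactual pair $(Y(0),Y(1))$. Starting from
$$PC_A=\frac{\sum_{k>t}\sum_{l\le t} y_{lk}}{\sum_{k>t} y_{+k}},$$
the denominator $\sum_{k>t} y_{+k}=\Pr(Y(1)>t)=\Pr(Y>t\mid D=1)$ is, under Assumptions 1 and 2, directly identifiable from the data, so the whole task is to bound the numerator $N:=\Pr(Y(0)\le t,\,Y(1)>t)=\sum_{k>t}\sum_{l\le t} y_{lk}$, a joint probability of two counterfactuals that is not point-identified.

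First I would rewrite $N=\Pr(A\cap B)$ for the events $A=\{Y(0)\le t\}$ and $B=\{Y(1)>t\}$, and observe that under Assumptions 1 and 2 the two marginals are identifiable, namely $\Pr(A)=\Pr(Y\le t\mid D=0)=\sum_{l\le t} y_{l+}$ and $\Pr(B)=\sum_{k>t} y_{+k}$. Next I would invoke the elementary Fr\'echet--Hoeffding bounds for the intersection of two events with prescribed marginals,
$$\max\bigl(0,\ \Pr(A)+\Pr(B)-1\bigr)\le \Pr(A\cap B)\le \min\bigl(\Pr(A),\Pr(B)\bigr),$$
which give matching bounds on $N$. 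Dividing through by the positive denominator $\Pr(B)$ then yields
$$PCL=\max\!\left(0,\ \frac{\sum_{l\le t} y_{l+}+\sum_{k>t} y_{+k}-1}{\sum_{k>t} y_{+k}}\right),\qquad PCU=\min\!\left(1,\ \frac{\sum_{l\le t} y_{l+}}{\sum_{k>t} y_{+k}}\right).$$

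As a consistency check I would specialize to the binary case $T=1,\ t=0$: there $\Pr(A)=y_{0+}=1-y_{1+}$ and $\Pr(B)=y_{+1}$, so $PCL$ collapses to $(y_{+1}-y_{1+})/y_{+1}=1-1/RR$ while $PCU$ reduces to $y_{0+}/y_{+1}$ (the minimum with $1$ being inactive whenever the medication is beneficial), exactly recovering Eq. (3). This both validates the derivation and fixes the correct orientation of the two inequalities.

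The main obstacle is not the derivation of the bounds but establishing their sharpness, i.e. exhibiting for each endpoint a genuine joint distribution of $(Y(0),Y(1))$ consistent with the full identified marginals $\{y_{l+}\}$ and $\{y_{+k}\}$ that attains it. The subtlety is that $A$ and $B$ are threshold coarsenings of variables whose entire marginals (not merely their threshold probabilities) are pinned down, so I would argue that one may first couple the indicator pair $(\mathbf{1}_A,\mathbf{1}_B)$ to realize the extreme Fr\'echet configuration and then redistribute mass freely within each of the blocks $\{Y(0)\le t\}$, $\{Y(0)>t\}$ and $\{Y(1)\le t\}$, $\{Y(1)>t\}$ to match the remaining marginal detail. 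This confirms that the extra marginal information does not tighten the bounds and that $PCL$ and $PCU$ are the best possible.
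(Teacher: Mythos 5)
Your proof is correct and is essentially the paper's argument in different notation: the paper's identity $\sum_{k>t}\sum_{l\le t}y_{lk}=\sum_{k>t}\sum_{l\le t}y_{kl}+\bigl(\sum_{k>t}y_{+k}-\sum_{l>t}y_{l+}\bigr)$ is exactly the inclusion--exclusion identity $\Pr(A\cap B)=\Pr(A^{c}\cap B^{c})+\Pr(A)+\Pr(B)-1$ underlying the Fr\'echet--Hoeffding lower bound you invoke (note $\Pr(A)+\Pr(B)-1=\sum_{k>t}y_{+k}-\sum_{l>t}y_{l+}$), and its upper-bound step is literally $\Pr(A\cap B)\le\min(\Pr(A),\Pr(B))$. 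Your added consistency check against Eq.\ (3) and the sharpness discussion go beyond what the paper proves, but the stated theorem only asserts the bounds, which you establish by the same route.
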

\begin{equation} 
PCL= \max \left\{ 
   0,  \frac{1}{\sum\nolimits_{k>t}{{{y}_{+k}}}} \times \left(\sum\nolimits_{k>t}{{{y}_{+k}}}-\sum\nolimits_{l>t}{{{y}_{l+}}} \right)\\ 
 \right\},
\end{equation}
\begin{equation}
PCU=\min \left\{1,\frac{\sum\nolimits_{l\le t}{{{y}_{l+}}}}{\sum\nolimits_{k>t}{{{y}_{+k}}}} \right\}.
\end{equation}
\begin{proof}
Note that
\[\left\{ \begin{matrix}
   \sum\nolimits_{k>t}{\sum\nolimits_{l\le t}{{{y}_{lk}}+\sum\nolimits_{k>t}{\sum\nolimits_{l>t}{{{y}_{lk}}=\sum\nolimits_{k>t}{{{y}_{+k}}}}}}}  \\
   \sum\nolimits_{k>t}{\sum\nolimits_{l\le t}{{{y}_{kl}}+\sum\nolimits_{k>t}{\sum\nolimits_{l>t}{{{y}_{kl}}=\sum\nolimits_{k>t}{{{y}_{k+}}}}}}}.  \\
\end{matrix} \right.\]
Thus, the numerator of Eq. (7) can be written as follows
\begin{equation}
\begin{split}
\sum\nolimits_{k>t}{\sum\nolimits_{l\le t}^{{}}{{{y}_{lk}}}}&=\sum\nolimits_{k>t}{\sum\nolimits_{l\le t}^{{}}{{{y}_{kl}}+(\sum\nolimits_{k>t}{{{y}_{+k}}-\sum\nolimits_{l>t}{{{y}_{l+}}}}}})\\
&\ge \sum\nolimits_{k>t}{{{y}_{+k}}-\sum\nolimits_{l>t}{{{y}_{l+}}}}.\nonumber \\
\end{split}
\end{equation}

\noindent From the above, it can be concluded that the Eq. (8) holds.

Next consider the upper bound on ${{PC}_{A}}$. Notice that
\[\left\{ \begin{matrix}
   \sum\nolimits_{k>t}{\sum\nolimits_{l\le t}{{{y}_{lk}}+\sum\nolimits_{k>t}{\sum\nolimits_{l>t}{{{y}_{lk}}=\sum\nolimits_{k>t}{{{y}_{+k}}}}}}}  \\
   \sum\nolimits_{k>t}{\sum\nolimits_{l\le t}{{{y}_{lk}}+\sum\nolimits_{k\le t}{\sum\nolimits_{l\le t}{{{y}_{lk}}=\sum\nolimits_{l\le t}{{{y}_{l+}}}}}}}. \\
\end{matrix} \right.\]
Thus, we have
\[\sum\nolimits_{k>t}{\sum\nolimits_{l\le t}^{{}}{{{y}_{lk}}}}\le \min \{\sum\nolimits_{k>t}{{{y}_{+k}},}\sum\nolimits_{l\le t}{{{y}_{l+}}}\}.\]
Combine with Eq. (7), Eq. (9) holds.

\end{proof}

\subsection{Complete mediator}
In this subsection, we will consider bounds on the probability of causality in the case where mediator variable $M$ completely mediates the effect of $D$ on $Y$ and $Y$ is an ordinal outcome variable, as shown in Figure 1.

When Assumptions 2 and 3 hold, we can also obtain that Eq. (7) holds. At this point the numerator of Eq. (7) can be expressed as
\begin{equation}
\sum\nolimits_{k>t}{\sum\nolimits_{l\le t}{{{y}^{*}}_{lk}{{m}_{01}}+}{{y}^{*}}_{kl}{{m}_{10}}}.
\end{equation}
\noindent Eq. (10) further considers the information of the mediator variable, and is not identifiable. The following theorem provides bounds on the probability of causality of ordinal outcomes in complete mediation analysis based on the upper and lower bounds of Eq. (10).
\begin{theorem}
Considering binary exposure, binary mediator, and ordinal outcome variables, where exposure has no direct effect on the outcome variable, and denoting the lower and upper bounds on the probability of causality by ${{PCL}_{M}}$ and ${{PCU}_{M}}$, respectively, the bounds on ${{PC}_{A}}$ defined by Eq. (5) under Assumptions 2 and 3 are as follows:
 \end{theorem}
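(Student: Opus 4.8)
The plan is to start from the decomposition in Eq.~(10), writing the numerator of Eq.~(7) as $N = m_{01}\,A + m_{10}\,B$, where $A = \sum_{k>t}\sum_{l\le t} y^*_{lk} = \Pr(Y^*(0)\le t,\, Y^*(1)>t)$ and $B = \sum_{k>t}\sum_{l\le t} y^*_{kl} = \Pr(Y^*(0)>t,\, Y^*(1)\le t)$. The denominator $\sum_{k>t} y_{+k}$ is directly identifiable, so the whole problem reduces to bounding the four nonnegative but non-identifiable factors $m_{01}, m_{10}, A, B$ in terms of the identifiable marginals $m_{0+}, m_{+1}, m_{1+}, m_{+0}$ and $y^*_{l+}, y^*_{+k}$.

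For the upper bound I would bound each factor by its Fr\'echet upper bound. Because $M$ is binary, $m_{01}$ and $m_{10}$ obey exactly the same bounds as in the binary complete-mediation case, namely $m_{01} \le \alpha_1 = \min(m_{0+}, m_{+1})$ and $m_{10} \le \alpha_1 + \beta_1 = \min(m_{1+}, m_{+0})$. For the ordinal factors I would binarize the events: treating $\{Y^*(0)\le t\}$ and $\{Y^*(1)>t\}$ as two-valued events gives $A \le \min\big(\sum_{l\le t} y^*_{l+},\, \sum_{k>t} y^*_{+k}\big)$, and swapping the roles of the two potential outcomes gives $B \le \min\big(\sum_{k>t} y^*_{k+},\, \sum_{l\le t} y^*_{+l}\big)$; these are the ordinal analogues of $\alpha_2$ and $\alpha_2 + \beta_2$, and they reduce to them when $t=0$. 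Since all four factors are nonnegative, each product is at most the product of the corresponding maxima, so $N$ is at most the sum of these two products; dividing by $\sum_{k>t} y_{+k}$ and capping at $1$ yields $PCU_M$, in direct parallel with Eq.~(4).

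For the lower bound I would argue that it coincides with $PCL$ of Theorem~1. The key observation is that the bound in Eq.~(8) was obtained using only the marginal laws of $Y(0)$ and $Y(1)$, encoded by $y_{l+}$ and $y_{+k}$, and these remain identifiable and unchanged under the mediation model (Eq.~(7) still holds under Assumptions~2 and~3). Hence the Fr\'echet-type inequality $\Pr(Y(0)\le t,\, Y(1)>t) \ge \sum_{k>t} y_{+k} - \sum_{l>t} y_{l+}$ continues to hold, giving $PCL_M = PCL$; as in the binary case, the mediator refines only the upper bound.

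The main obstacle is tightness of the upper bound. Bounding each of $m_{01}, m_{10}, A, B$ separately is valid but a priori loose, because the two summands compete: $m_{01}A$ wants both $m_{01}$ and $A$ maximal while $m_{10}B$ wants $m_{10}$ and $B$ maximal, and the marginal constraints tie these together. The crux is therefore to exhibit couplings attaining all four maxima at once. For the binary mediator one checks that setting $m_{00}=0$ (when $m_{0+}\le m_{+1}$) or $m_{11}=0$ (otherwise) makes $m_{01}=\alpha_1$ and $m_{10}=\alpha_1+\beta_1$ hold simultaneously; the same binary argument applied to the indicators $\mathbf{1}(Y^*(0)>t)$ and $\mathbf{1}(Y^*(1)>t)$ shows $A$ and $B$ can be maximized together, after which I would lift this binarized coupling to a full joint law on $\{0,\ldots,T\}^2$ with the prescribed ordinal marginals by distributing mass within each block. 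Since $\mathbf{M}$ and $\mathbf{Y^*}$ are independent and their couplings are unconstrained by the identifiable marginals, these choices combine to realize $N$ exactly, confirming that the stated upper bound is sharp.
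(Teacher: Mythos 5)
Your upper-bound argument is essentially the paper's, just with different bookkeeping: the paper substitutes $m_{10}=m_{01}+c$ and $\sum_{k>t}\sum_{l\le t}y^*_{kl}=A+B$ (with $A=\sum_{k>t}\sum_{l\le t}y^*_{lk}$) and then maximizes using $m_{01}\le b$, $A\le C$, which coincides with your factor-by-factor Fr\'echet bounds because $b+c=\min(m_{1+},m_{+0})$ and $C+B=\min\bigl(\sum_{k>t}y^*_{k+},\sum_{k\le t}y^*_{+k}\bigr)$. Where you genuinely diverge is the lower bound: the paper stays inside the mediation decomposition, minimizing $m_{01}A+(m_{01}+c)(A+B)$ over $m_{01}\ge\max\{0,-c\}$ and $A\ge\max\{0,-B\}$ to obtain $\max\{0,cB\}$, whereas you observe that the marginal-only Fr\'echet lower bound of Theorem 1 survives unchanged under the mediation model. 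Both yield the same number, but note that to recover the theorem's displayed formula $\max\{0,cB\}/\sum_{k>t}y_{+k}$ your route still requires the identity $cB=\sum_{k>t}y_{+k}-\sum_{l>t}y_{l+}$, which the paper only proves later (in Theorem 3, via $y_{+k}=m_{+0}y^*_{k+}+m_{+1}y^*_{+k}$ and $y_{l+}=m_{0+}y^*_{l+}+m_{1+}y^*_{+l}$); as written, your lower-bound step presupposes that Theorem-3 identity rather than deriving the stated expression, so you should supply that one line of algebra. Your closing discussion of sharpness --- simultaneous attainment of the Fr\'echet maxima for $(m_{01},m_{10})$ and for the binarized outcome events, followed by lifting the block coupling to a full joint law on $\{0,\dots,T\}^2$ --- is sound in outline and goes beyond the paper, which asserts the bounds without ever addressing attainability.
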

\begin{equation}
   PC{{L}_{M}}=  \max \left\{ \begin{split}0,\frac{1}{\sum\nolimits_{k>t}{{{y}_{+k}}}}\times cB
\end{split} \right\}, \\ 
\end{equation}

\begin{equation}
PC{{U}_{M}}=\frac{bC+(b+c)(C+B)}{\sum\nolimits_{k>t}{{{y}_{+k}}}}.
\end{equation}
Where $B=\sum\nolimits_{l>t}{{{y}^{*}}_{l+}}-\sum\nolimits_{k>t}{{{y}^{*}}_{+k}}$, $c={{m}_{+0}}-{{m}_{0+}}$,  $C=\min (\sum\nolimits_{k>t}{{{y}^{*}}_{+k}},\sum\nolimits_{l\le t}{{{y}^{*}}_{l+}})$, $b=\min \{{{m}_{0+}},{{m}_{+1}}\}$.

\noindent It is easy to see that the upper bound on the probability of causality given by Eq. (12) does not exceed 1.

\begin{proof}
Note that
\[\left\{ \begin{matrix}
   {{m}_{00}}+{{m}_{01}}={{m}_{0+}}  \\
   {{m}_{00}}+{{m}_{10}}={{m}_{+0}}  \\
\end{matrix} \right.,\] and \[\left\{ \begin{matrix}
   \sum\nolimits_{k>t}{\sum\nolimits_{l\le t}{{{y}^{*}}_{lk}+\sum\nolimits_{k>t}{\sum\nolimits_{l>t}{{{y}^{*}}_{lk}=\sum\nolimits_{k>t}{{{y}^{*}}_{+k}}}}}}  \\
   \sum\nolimits_{k>t}{\sum\nolimits_{l\le t}{{{y}^{*}}_{kl}+\sum\nolimits_{k>t}{\sum\nolimits_{l>t}{{{y}^{*}}_{kl}=\sum\nolimits_{k>t}{{{y}^{*}}_{k+}}}}}} \\
\end{matrix} \right..\]
Thus, Eq. (10) can be written as follows
\begin{equation}
\begin{split}
&\sum\nolimits_{k>t}{\sum\nolimits_{l\le t}^{{}}{{{y}^{*}}_{lk}{{m}_{01}}+}{{y}^{*}}_{kl}{{m}_{10}}} \nonumber \\
=&{{m}_{01}}\sum\nolimits_{k>t}{\sum\nolimits_{l\le t}{{{y}^{*}}_{lk}+({{m}_{01}}+{{m}_{+0}}-{{m}_{0+}})(\sum\nolimits_{k>t}{\sum\nolimits_{l\le t}{{{y}^{*}}_{lk}}+}\sum\nolimits_{l>t}{{{y}^{*}}_{l+}-\sum\nolimits_{k>t}{{{y}^{*}}_{+k}}})}}. \nonumber
\end{split}
\end{equation}
Let $B=\sum\nolimits_{l>t}{{{y}^{*}}_{l+}}-\sum\nolimits_{k>t}{{{y}^{*}}_{+k}}$, $c={{m}_{+0}}-{{m}_{0+}}$, we have
\begin{equation}
\begin{split}
    &\sum\nolimits_{k>t}{\sum\nolimits_{l\le t}^{{}}{{{y}^{*}}_{lk}{{m}_{01}}+}{{y}^{*}}_{kl}{{m}_{10}}} \nonumber \\
\ge & \max \{0,-c\}\times \max \{0,-B\}+(\max \{0,-c\}+c)\times (\max \{0,-B\}+B) \nonumber \\ 
  = & \max \{0,cB\}. \nonumber
\end{split}
\end{equation}
From the above, it can be concluded that the Eq. (11) holds.

Next consider the upper bound on ${{PC}_{A}}$. Notice that
\[\left\{ \begin{matrix}
   {{m}_{00}}+{{m}_{01}}={{m}_{0+}}  \\
   {{m}_{11}}+{{m}_{01}}={{m}_{+1}}  \\
\end{matrix} \right.,\] and \[\left\{ \begin{matrix}
   \sum\nolimits_{k>t}{\sum\nolimits_{l\le t}{{{y}^{*}}_{lk}+\sum\nolimits_{k>t}{\sum\nolimits_{l>t}{{{y}^{*}}_{lk}=\sum\nolimits_{k>t}{{{y}^{*}}_{+k}}}}}}  \\
   \sum\nolimits_{k>t}{\sum\nolimits_{l\le t}{{{y}^{*}}_{lk}+\sum\nolimits_{k\le t}{\sum\nolimits_{l\le t}{{{y}^{*}}_{lk}=\sum\nolimits_{l\le t}{{{y}^{*}}_{l+}}}}}}  \\
\end{matrix} \right..\]
Let $b=\min \{{{m}_{0+}},{{m}_{+1}}\}$, $C=\min (\sum\nolimits_{k>t}{{{y}^{*}}_{+k}},\sum\nolimits_{l\le t}{{{y}^{*}}_{l+}})$, we have
\begin{equation}
\begin{split}
    & \sum\nolimits_{k>t}{\sum\nolimits_{l\le t}^{{}}{{{y}^{*}}_{lk}{{m}_{01}}+}{{y}^{*}}_{kl}{{m}_{10}}} \nonumber  \\ 
  = & {{m}_{01}}\sum\nolimits_{k>t}{\sum\nolimits_{l\le t}{{{y}^{*}}_{lk}+({{m}_{01}}+c)(\sum\nolimits_{k>t}{\sum\nolimits_{l\le t}{{{y}^{*}}_{lk}}+}B)}} \nonumber  \\ 
\le &  bC+(b+c)(C+B). \nonumber
\end{split}
\end{equation}
Combine with Eqs. (7) and (10), Eq. (12) holds.
\end{proof}
Next we will compare the upper and lower bounds on the probability of causality given by the two theorems above, and similarly, the consideration of extra mediator variable information under ordinal outcomes yields a narrower bound:
\begin{theorem}
 In the bounds proposed by Theorem 2, the upper bound is improved, while the lower bound remains the same as in Theorem 1. Specifically, we have $PCL= PC{{L}_{M}}, PC{{U}_{M}}\le PCU$.
\end{theorem}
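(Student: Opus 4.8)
The plan is to reduce both claims to a single structural fact about the complete-mediation model of Figure 1: under Assumptions 2 and 3 every marginal outcome probability factorizes through the mediator. Concretely, since $Y$ depends on $D$ only through $M$ and $\mathbf{M}$ is independent of $\mathbf{Y^{*}}$, the law of total probability gives $y_{+k}=m_{+0}\,y^{*}_{k+}+m_{+1}\,y^{*}_{+k}$ and $y_{l+}=m_{0+}\,y^{*}_{l+}+m_{1+}\,y^{*}_{+l}$. Summing over the relevant ranges then expresses the marginal sums $\sum_{k>t}y_{+k}$, $\sum_{l>t}y_{l+}$ and $\sum_{l\le t}y_{l+}$ of Theorem 1 purely in terms of $m_{0+}$, $m_{+0}$ and the conditional probabilities $\sum_{k>t}y^{*}_{+k}$, $\sum_{l>t}y^{*}_{l+}$. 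This factorization is the common tool for both comparisons.

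For the lower bound I would substitute these expressions into $\sum_{k>t}y_{+k}-\sum_{l>t}y_{l+}$ and simplify. Writing $u=\sum_{l>t}y^{*}_{l+}$ and $v=\sum_{k>t}y^{*}_{+k}$, the terms carrying $m_{+1}$ and $m_{1+}$ recombine using $m_{+1}-m_{1+}=m_{0+}-m_{+0}=-c$, and the whole difference collapses to $(m_{+0}-m_{0+})(u-v)=cB$. Hence the nontrivial argument of the maximum defining $PCL$ equals $cB/\sum_{k>t}y_{+k}$, which is exactly the argument defining $PCL_{M}$, so $PCL=PCL_{M}$.

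For the upper bound, observe that $PCU_{M}$ and $PCU$ share the denominator $\sum_{k>t}y_{+k}$, and $PCU_{M}\le 1$ is already recorded after Eq. (12); it therefore suffices to show the mediation numerator obeys $bC+(b+c)(C+B)\le\sum_{l\le t}y_{l+}$. I see two routes. The conceptual route notes that for any admissible mediation model the numerator (10) equals $\Pr(Y(0)\le t,\,Y(1)>t)$, which is trivially bounded by the marginal $\Pr(Y(0)\le t)=\sum_{l\le t}y_{l+}$; since this marginal is fixed by the identified quantities and $bC+(b+c)(C+B)$ is attained by a genuine model (the corner $m_{01}=b$, $\sum_{k>t}\sum_{l\le t}y^{*}_{lk}=C$ is realizable because $\mathbf{M}$ and $\mathbf{Y^{*}}$ are independent, so their Fréchet extremes may be chosen separately), the maximum inherits the same bound. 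The direct route substitutes the factorization into $\sum_{l\le t}y_{l+}=m_{0+}(1-u)+m_{1+}(1-v)$ and checks the inequality by splitting on which argument attains each of $b=\min(m_{0+},m_{+1})$ and $C=\min(v,1-u)$.

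The main obstacle is precisely this upper-bound inequality. In the direct route the four sign combinations each reduce, after cancellation, to a product inequality such as $(m_{0+}+m_{+0}-1)(1-v)\le 0$ or $(1-u)(1-m_{0+}-m_{+0})\le 0$, whose sign is pinned down by the case-defining condition (for instance $b=m_{0+}$ forces $m_{0+}+m_{+0}\le 1$). In the conceptual route the only delicate point is verifying that $bC+(b+c)(C+B)$ is the actual maximum and not a loose termwise bound; I would settle this by exhibiting joint laws of $\mathbf{M}$ and $\mathbf{Y^{*}}$ that simultaneously attain $m_{01}=b$ and $\sum_{k>t}\sum_{l\le t}y^{*}_{lk}=C$, which is possible exactly because Assumption 3 decouples the two optimizations. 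Either way the conclusion $PCU_{M}\le\min\{1,\sum_{l\le t}y_{l+}/\sum_{k>t}y_{+k}\}=PCU$ follows.
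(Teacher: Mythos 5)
Your lower-bound argument is exactly the paper's: substitute the factorizations $y_{+k}=m_{+0}y^{*}_{k+}+m_{+1}y^{*}_{+k}$ and $y_{l+}=m_{0+}y^{*}_{l+}+m_{1+}y^{*}_{+l}$ and watch $\sum_{k>t}y_{+k}-\sum_{l>t}y_{l+}$ collapse to $cB$, so that the two maxima coincide. For the upper bound the paper takes what you call the direct route, but in a simpler form than you anticipate: no case analysis on which argument attains each minimum is needed. Since $b$, $C$, $b+c=\min\{m_{+0},m_{1+}\}$ and $C+B=\min\{\sum_{k\le t}y^{*}_{+k},\sum_{l>t}y^{*}_{l+}\}$ are all nonnegative, one bounds $bC\le m_{0+}\sum_{l\le t}y^{*}_{l+}$ and $(b+c)(C+B)\le m_{1+}\sum_{k\le t}y^{*}_{+k}$ termwise and recognizes the sum as $\sum_{l\le t}y_{l+}$ via the same factorization; your four-case reduction to sign conditions like $(m_{0+}+m_{+0}-1)(1-v)\le 0$ reaches the same place with more bookkeeping. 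Your conceptual route is genuinely different from the paper: it identifies the numerator in Eq.\ (10) with $\Pr(Y(0)\le t,\,Y(1)>t)\le\Pr(Y(0)\le t)=\sum_{l\le t}y_{l+}$ and transfers that bound to $bC+(b+c)(C+B)$ by arguing that this value is actually attained at the Fr\'echet corners $m_{01}=b$ and $\sum_{k>t}\sum_{l\le t}y^{*}_{lk}=C$, which Assumption 3 lets you choose independently for $\mathbf{M}$ and $\mathbf{Y^{*}}$. That route buys something the paper never establishes, namely sharpness of $PCU_{M}$, but it costs you two verifications the termwise inequality avoids: realizability of the corner laws and monotonicity of $m_{01}S+(m_{01}+c)(S+B)$ in both free parameters (which holds, since the partial coefficients are $S+(S+B)\ge 0$ and $m_{01}+m_{10}\ge 0$), so that the corner really is the maximizer. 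Both routes are sound.
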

\begin{proof}

Firstly, considering the numerator of the fraction in Eq. (11), we have
\begin{equation}
\begin{split}
    & ({{m}_{+0}}-{{m}_{0+}})(\sum\nolimits_{l>t}{{{y}^{*}}_{l+}}-\sum\nolimits_{k>t}{{{y}^{*}}_{+k}}) \\ 
  = & ({{m}_{+0}}-{{m}_{0+}})\sum\nolimits_{l>t}{{{y}^{*}}_{l+}}+({{m}_{+1}}-{{m}_{1+}})\sum\nolimits_{k>t}{{{y}^{*}}_{+k}} \\ 
  = & {{m}_{+0}}\sum\nolimits_{l>t}{{{y}^{*}}_{l+}}+{{m}_{+1}}\sum\nolimits_{k>t}{{{y}^{*}}_{+k}}-({{m}_{0+}}\sum\nolimits_{l>t}{{{y}^{*}}_{l+}}+{{m}_{1+}}\sum\nolimits_{k>t}{{{y}^{*}}_{+k}}). \nonumber \\ 
  %= & \sum\nolimits_{k>t}{{{y}_{+k}}}-\sum\nolimits_{l>t}{{{y}_{l+}}} \nonumber \\ 
\end{split}
\end{equation}
In addition, it is easy to obtain the following equations:
\begin{equation}
\left\{\begin{split}
  & {{y}_{+k}}={{m}_{+0}}{{y}^{*}}_{k+}+{{m}_{+1}}{{y}^{*}}_{+k} \\ 
 & {{y}_{l+}}={{m}_{0+}}{{y}^{*}}_{l+}+{{m}_{1+}}{{y}^{*}}_{+l} \nonumber \\ 
\end{split} \right..
\end{equation}
Thus,
   \[  ({{m}_{+0}}-{{m}_{0+}})(\sum\nolimits_{l>t}{{{y}^{*}}_{l+}}-\sum\nolimits_{k>t}{{{y}^{*}}_{+k}})
  = \sum\nolimits_{k>t}{{{y}_{+k}}}-\sum\nolimits_{l>t}{{{y}_{l+}}}. \]
Combine with Eq. (8), it can be concluded that $PCL= PC{{L}_{M}}$.

Next prove that $PC{{U}_{M}}\le PCU$:
\begin{equation}
\begin{split}
  & bC+(b+c)(C+B) \\
  = & \min \{{{m}_{0+}},{{m}_{+1}}\}\times \min \{\sum\nolimits_{k>t}{{{y}^{*}}_{+k},\sum\nolimits_{l\le t}{{{y}^{*}}_{l+}}}\}+\min \{{{m}_{+0}},{{m}_{1+}}\}
  \times \min \{\sum\nolimits_{k\le t}{{{y}^{*}}_{+k},\sum\nolimits_{l>t}{{{y}^{*}}_{l+}}}\} \\ 
 \le &  {{m}_{0+}}\sum\nolimits_{l\le t}{{{y}^{*}}_{l+}}+{{m}_{1+}}\sum\nolimits_{k\le t}{{{y}^{*}}_{+k}} \\ 
  = & \sum\nolimits_{l\le t}{{{y}_{l+}}}. \nonumber \\ 
\end{split}
\end{equation}
Combine with Eq. (9), it can be concluded that $PC{{U}_{M}}\le PCU$.
\end{proof}

\begin{example}
Consider the case where Y takes on three values, suppose the following probabilities can be obtained from the data:
\begin{align*}
 &\Pr \left( M=1|D=1 \right)=0.95       &\Pr &\left( Y=1|M=0 \right)=0.1 \\
 &\Pr \left( M=1|D=0 \right)=0.15       &\Pr &\left( Y=0|M=1 \right)=0.25 \\
 &\Pr \left( Y=1|M=1 \right)=0.05       &\Pr &\left( Y=0|M=0 \right)=0.8.
\end{align*}
According to Eqs. (11) and (12), it is calculated that $0.71\le P{{C}_{A}}\le 0.89$. A wider bound is obtained if the information on the mediator variable is not utilized: $0.71\le P{{C}_{A}}\le 1$.
\end{example}
\section{Simulation studies}
In this section, we will assess the quality of the bounds on the probability of causality given in Theorem 2 and compare the bounds in Theorems 1 and 2 by analyzing the midpoints of their respective upper and lower bounds.

Consider the case where T = 2 and t = 1(i.e., Y has three values), in this case $P{{C}_{A}}$ can be expressed as
\[\frac{({{y}^{*}}_{02}+{{y}^{*}}_{12}){{m}_{01}}+({{y}^{*}}_{20}+{{y}^{*}}_{21}){{m}_{10}}}{{{y}^{*}}_{+2}{{m}_{+1}}+{{y}^{*}}_{2+}{{m}_{+0}}}.\]
We first randomly generated 100 samples of  $P{{C}_{A}}$, and then, we generated sample distributions for each sample that matched the $P{{C}_{A}}$. Following this, we draw the graph of the real value of the probability of causality along with its upper and lower bounds as determined by Theorem 2. The results are shown in Figure 2.

\begin{figure}[H]                           
  \centering                               
  \includegraphics[scale=0.7]{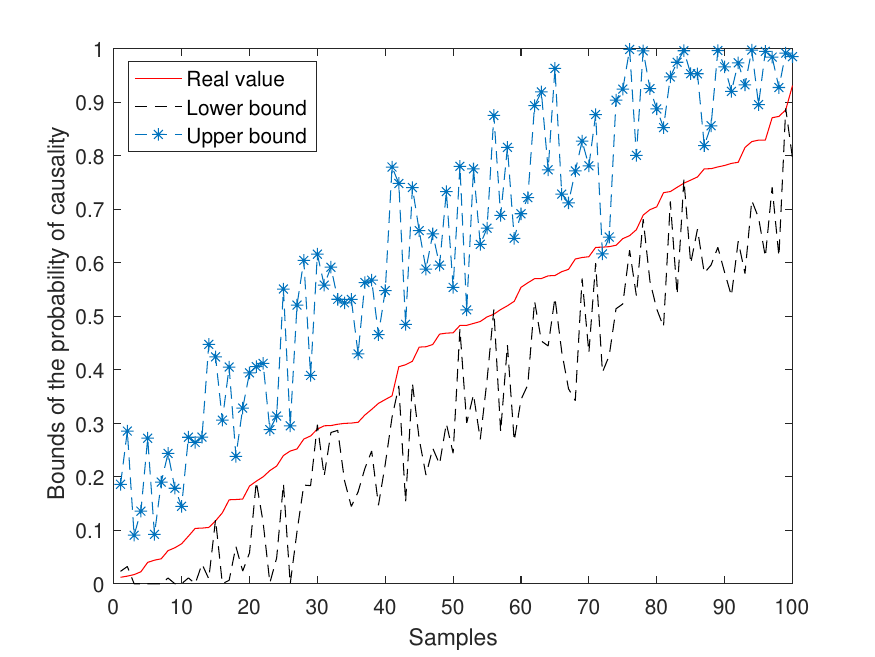}    
  \caption{Bounds of the $P{{C}_{A}}(T=2, t=1)$ in complete mediation analyses for 100 samples}\label{fig2}
\end{figure}

Figure 2 shows that, in general, the estimates of the upper and lower bounds on the probability of causality are closely around the real values.
Similarly to the simulation above, we again randomly generated 100 samples of $P{{C}_{A}}$ and sample distributions campatible with $P{{C}_{A}}$. We then draw the graph of the real value of the probability of causality, the midpoints of the bounds proposed through Theorem 1, and the midpoints of the bounds proposed through Theorem 2. The results are shown in Figure 3, where the corresponding random number generation mechanism is consistent with Figure 2.

\begin{figure}[H]                           
  \centering                               
  \includegraphics[scale=0.7]{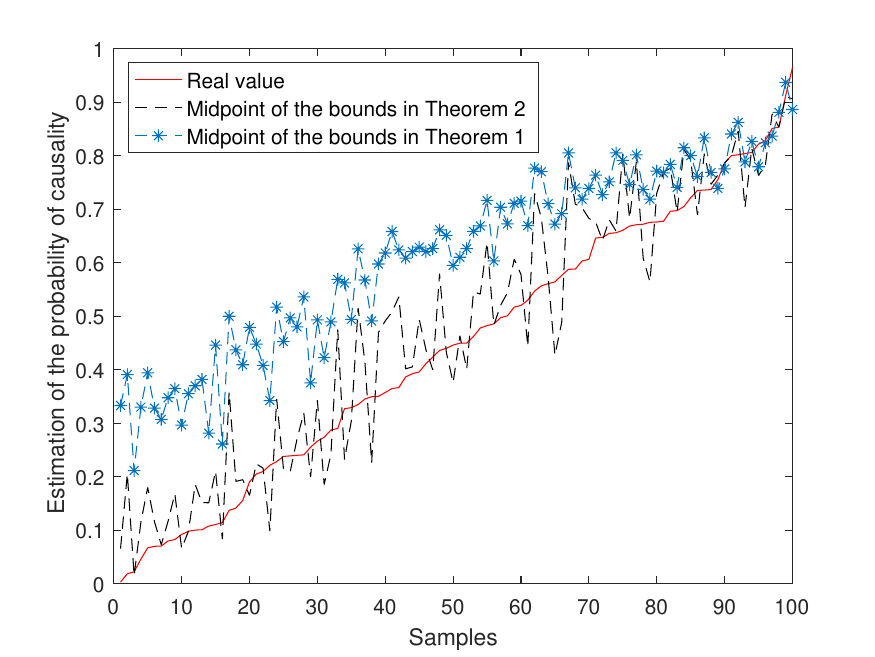}     
  \caption{The midpoints of the upper and lower bounds of the $P{{C}_{A}}(T=2, t=1)$ proposed in Theorem 1 and Theorem 2 for 100 samples }\label{fig3}
\end{figure}

From Figure 3, the midpoints of the bounds on the probability of causality proposed through Theorem 2 are good estimates of real values. In contrast, the midpoints of the bounds proposed by Theorem 1 provide better estimates when the real values are close to 1, but show large deviations when the real values are close to 0. The bounds proposed in Theorem 1 and Theorem 2 have an average gap (Upper bound - Lower bound) of 0.58 and 0.28 across 100 samples, respectively. This further suggests that it is more accurate to use the bounds proposed in Theorem 2 in complete mediation analysis.

\section{Discussion}
This article makes statistical inferences about the probability of causality in a specific problem situation for binary and ordinal outcomes in turn. The main contribution of this article is to provide bounds on the probability of causality under ordinal outcomes and to illustrate improved upper and lower bounds that can be obtained by taking into account additional information about the mediator variable in cases where exposure has no direct effect on the outcome. Although examples of complete mediation in practical research are uncommon, this paper still has theoretical significance and academic research value. In addition, in some problems of interest, there may be covariates that affect both the exposure and outcome variables, in which case Assumptions 1 and 3 can be varied to hold after adjusting for the covariates, and the study of such problems under ordinal outcomes is left for future work.

\section*{Acknowledgments}
The research is partially supported by the National Natural Science Foundation of China (Grant no. 12001334), the Natural Science Foundation of Shandong Province (Grant no.  ZR2020QA022, ZR2021MA048).

\section*{Disclosure statement}
The authors declare that they have no conflict of interest.

\end{document}